\theoremstyle{plain}
\newtheorem{thm}{Theorem}[section]
\newtheorem{lemma}[thm]{Lemma}
\newtheorem{prop}[thm]{Proposition}
\theoremstyle{definition}
\numberwithin{equation}{section}
\title{Remark on a Simple Proof of the Mean Value of $K_2(\mathcal{O})$ in Function Fields}
\author{J. MacMillan}
\date{}
\newcommand\blfootnote[1]{%
  \begingroup
  \renewcommand\thefootnote{}\footnote{#1}%
  \addtocounter{footnote}{-1}%
  \endgroup
}
\begin{document}
\maketitle
\blfootnote{2010 Mathematics Subject Classification: Primary 11M38; Secondary 11G20, 11M06, 13F30, 11R58, 14G10  \\Date: October 10, 2019\\Key Words: finite fields, function fields, algebraic $K$ groups, quadratic Dirichlet L-functions, square-free polynomials, Riemann Hypothesis for curves  }
\par\noindent
ABSTRACT: Let $\mathbb{F}_q$ denote a finite field of odd cardinality $q$, $\mathbb{A}=\mathbb{F}_q[T]$ the polynomial ring over $\mathbb{F}_q$ and $k=\mathbb{F}_q(T)$ the rational function field over $\mathbb{F}_q$. In this paper, we compute the average value of the size of the group $K_2(\mathcal{O}_{\gamma D})$, where $\mathcal{O}_{\gamma D}$ denotes the integral closure of $\mathbb{A}$ in $k(\sqrt{\gamma D})$, $D$ is a monic, square-free polynomial of even degree and $\gamma$ is a fixed generator of $\mathbb{F}_q^*$.
\section{Introduction}
Let $k=\mathbb{F}_q(T)$ denote the rational function field over a finite field $\mathbb{F}_q$ and let $\mathbb{A}=\mathbb{F}_q[T]$ denote the polynomial ring of the finite field $\mathbb{F}_q$, where $q$ is assumed to be odd and greater than 3. For $f\in\mathbb{A}$, the norm of $f$, $|f|$ is defined to be $q^{\text{deg}(f)}$ if $f$ is non-zero and 0 otherwise. For $\Re(s)>1$, the zeta function associated with $\mathbb{A}$ is defined by
\begin{equation}
    \zeta_{\mathbb{A}}(s):=\sum_{f\in\mathbb{A}^+}\frac{1}{|f|^s}=\prod_P(1-|P|^{-s})^{-1},
\end{equation}
where the letter $P$ denotes a monic, irreducible polynomial in $\mathbb{A}$, $\mathbb{A}^+$  denotes the set of all monic polynomials in $\mathbb{A}$ and $\mathbb{A}^+_n$ denotes the set of monic polynomials of degree $n$ in $\mathbb{A}$. Since $\#\mathbb{A}^+_n=q^n$, then $\zeta_{\mathbb{A}}(s)=(1-q^{1-s})^{-1}$. For $D\in\mathbb{A}$ square-free, the quadratic Dirichlet character $\chi_D(f)$ is equal to the Kronecker symbol $\left(\frac{D}{f}\right)$. Therefore we can define the Dirichlet L-function corresponding to the Dirichlet character $\chi_D$ as
\begin{equation}\label{eq:1.2}
    L(s,\chi_D):=\sum_{f\in\mathbb{A}^+}\frac{\chi_D(f)}{|f|^s}.
\end{equation}
We define $\mathcal{O}_D$ to be the integral closure of $\mathbb{A}$ in the quadratic function field $K_D=k(\sqrt{D})$. The zeta function of the ring $\mathcal{O}_D$ is defined as
\begin{equation}
    \zeta_{\mathcal{O}_D}(s)=\sum_{\mathfrak{a}}N\mathfrak{a}^{-s},
\end{equation}
where $\mathfrak{a}$ runs through all non-zero ideals of $\mathbb{A}[\sqrt{D}]$ and $N\mathfrak{a}$ denotes the norm of $\mathfrak{a}$. From \cite{Rosen2002}, Proposition 17.7, we have the relation
\begin{equation}
    \zeta_{\mathcal{O}_D}(s)=\zeta_{\mathbb{A}}(s)L(s,\chi_D).
\end{equation}
\par\noindent
Let $F=\mathbb{F}_q$ and let $K \backslash F$ be a function field in one variable defined over the finite constant field $\mathbb{F}_q$.  The primes in $K$ are denoted by $v$ and $\mathcal{O}_v$ is the valuation ring at $v$. Let $\mathcal{P}_v$ denote the maximal ideal of $\mathcal{O}_v$ and $\bar{F}_v$ by the residue class of $v$. The tame symbol $(*,*)_v$ is a mapping from $K^*\times K^*$ to $\bar{F}_v$ which is defined as
\begin{equation}
    (a,b)_v=(-1)^{v(a)v(b)}\frac{a^{v(b)}}{b^{v(a)}}\text{ mod }\mathcal{P}_v.
\end{equation}
Let $a\in K^*$ be such that $a\neq 0,1$. The group $K_2(K)$ is defined to be $K^*\bigotimes K^*$ modulo the subgroup generated by the elements $a\bigotimes (1-a)$. Moore, (see \cite{Tate1971}), proved that the following sequence is exact
\begin{equation}
    (0)\rightarrow\text{ker}(\lambda)\rightarrow K_2(K)\xrightarrow{\lambda}\bigoplus_v\bar{F}^*_v\xrightarrow{\mu}F^*\rightarrow (0),
\end{equation}
where $\lambda:K_2(K)\rightarrow\bigoplus_v\bar{F}^*_v$ is the sum of the tame symbol maps and $\mu:\bigoplus_v\bar{F}^*_v\rightarrow F^*$ is the map given by $\mu(\dotsc,a_v,\dotsc)=\prod_va_v^{m_v/m}$ where $m_v=N\mathcal{P}_v-1$ and $m=|F^*|=q-1$. In \cite{Tate1971}, Tate gave a proof of the Birch-Tate conjecture concerning the size of ker$(\lambda)$. He proved that 
\begin{equation}
    |\text{ker}(\lambda)|=(q-1)(q^2-1)\zeta_K(-1)
\end{equation}
where $\zeta_K(s)=\prod_v(1-N\mathcal{P}_v^{-s})^{-1}$, the product being over all primes $v$ over the function field $K$.\\
\par\noindent
For $m\in\mathbb{A}, m$ square-free, Rosen, \cite{Rosen1995AverageFields}, was was able to relate the number $L(2,\chi_m)$ to the size of the group $K_2(\mathcal{O}_m)$ using the notation above. 
\begin{prop}
Let $K_m=k(\sqrt{m})$, where $m$ is a square-free polynomial of degree $M$  in $\mathbb{A}$ . Then
\begin{enumerate}[a)]
    \item If M is odd then 
    \begin{equation}
        K_2(\mathcal{O}_m)=q^{\frac{3M}{2}}q^{-\frac{3}{2}}L(2,\chi_m).
    \end{equation}
    \item If M is even and the and the leading coefficient of $m$ is not a square, then
    \begin{equation}
        K_2(\mathcal{O}_m)=q^{\frac{3M}{2}}(q+1)q^{-1}(q^2+1)^{-1}L(2,\chi_m).
    \end{equation}
\end{enumerate}
\end{prop}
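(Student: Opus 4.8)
The plan is to deduce the size of $K_2(\mathcal{O}_m)$ from Tate's evaluation of $\ker(\lambda)$ and then convert $\zeta_{K_m}(-1)$ into the value $L(2,\chi_m)$ via the functional equation. The first task is to relate $|K_2(\mathcal{O}_m)|$ to $|\ker(\lambda)|$, since Moore's sequence concerns all places of $K_m$ while $\mathcal{O}_m$ only sees the finite ones. For the Dedekind domain $\mathcal{O}_m$ with fraction field $K_m$, the localization sequence in $K$-theory together with the vanishing of $K_2$ of a finite field gives an injection $K_2(\mathcal{O}_m)\hookrightarrow K_2(K_m)$ whose image is exactly the kernel $\ker(\lambda_{\mathrm{fin}})$ of the sum of tame symbols over the \emph{finite} primes. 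Since $\ker(\lambda)$ in Moore's sequence is the kernel over \emph{all} primes, we get $\ker(\lambda)\subseteq K_2(\mathcal{O}_m)$, and the quotient $K_2(\mathcal{O}_m)/\ker(\lambda)$ is carried injectively by the tame symbols at the infinite primes into $\bigoplus_{v\mid\infty}\bar{F}_v^*$.

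I would then identify this image precisely, and I expect this to be the main obstacle, as it is what genuinely uses the \emph{exactness} of Moore's sequence rather than just Tate's numerical formula. Given $\xi\in K_2(\mathcal{O}_m)$, its image under $\lambda$ has zero components at all finite primes and, lying in $\mathrm{Im}(\lambda)=\ker(\mu)$, must have its infinite components in $\ker(\mu|_\infty)=\{(a_v)_{v\mid\infty}:\prod_{v\mid\infty}a_v^{m_v/m}=1\}$. Conversely, any such tuple, extended by zeros at the finite primes, lies in $\ker(\mu)=\mathrm{Im}(\lambda)$, hence is hit by some $\xi\in K_2(K_m)$; since the finite components vanish, $\xi\in K_2(\mathcal{O}_m)$. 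This gives $K_2(\mathcal{O}_m)/\ker(\lambda)\cong\ker(\mu|_\infty)$, so $|K_2(\mathcal{O}_m)|=|\ker(\lambda)|\cdot|\ker(\mu|_\infty)|$.

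Next I would determine the splitting of the infinite place of $k$ in $K_m$, which is where the two cases separate. For $M$ odd the infinite place ramifies: there is a single infinite prime with residue field $\mathbb{F}_q$, so $m_v/m=1$ and $|\ker(\mu|_\infty)|=1$. For $M$ even with non-square leading coefficient the infinite place is inert: there is a single infinite prime with residue field $\mathbb{F}_{q^2}$, so $m_v/m=(q^2-1)/(q-1)=q+1$ and $|\ker(\mu|_\infty)|=\#\{a\in\mathbb{F}_{q^2}^*:a^{q+1}=1\}=q+1$. (The excluded even case with square leading coefficient is precisely the split case, with two infinite primes and a different factor.)

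Finally I would run the zeta computation. Writing $\zeta_{K_m}(s)=\zeta_{\mathcal{O}_m}(s)\prod_{v\mid\infty}(1-N\mathcal{P}_v^{-s})^{-1}$ and using $\zeta_{\mathcal{O}_m}(s)=\zeta_{\mathbb{A}}(s)L(s,\chi_m)$ with $\zeta_{\mathbb{A}}(s)=(1-q^{1-s})^{-1}$, I evaluate at $s=-1$ and substitute into $|\ker(\lambda)|=(q-1)(q^2-1)\zeta_{K_m}(-1)$; the elementary factors cancel, leaving $|\ker(\lambda)|=L(-1,\chi_m)$ in the odd case and $|\ker(\lambda)|=L(-1,\chi_m)/(q+1)$ in the inert case. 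To replace $L(-1,\chi_m)$ by $L(2,\chi_m)$ I use the functional equation $P(u)=q^{g}u^{2g}P(1/(qu))$ for the numerator $P$ of $\zeta_{K_m}$, where $u=q^{-s}$ and $g$ is the genus ($g=(M-1)/2$ for $M$ odd, $g=M/2-1$ for $M$ even), together with the identity $L(s,\chi_m)=P(u)$ in the odd case and $L(s,\chi_m)=(1+u)P(u)$ in the inert case. Setting $u=q$ turns $L(-1,\chi_m)$ into a multiple of $L(2,\chi_m)$; collecting the powers of $q$ (a factor $q^{3g}$) and multiplying by $|\ker(\mu|_\infty)|$ from the second step yields $q^{3M/2}q^{-3/2}L(2,\chi_m)$ in case (a) and $q^{3M/2}(q+1)q^{-1}(q^2+1)^{-1}L(2,\chi_m)$ in case (b). The remaining work is bookkeeping of $q$-powers and verifying the genus and infinite Euler factors are as stated.
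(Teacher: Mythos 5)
Your proof is correct, and since the paper states Proposition 1.1 without proof (it is quoted from Rosen's cited work), the right comparison is with Rosen's original argument, which yours reproduces: the localization sequence plus vanishing of $K_2$ of finite fields to realize $K_2(\mathcal{O}_m)$ inside $K_2(K_m)$ as the kernel of the finite tame symbols, exactness of Moore's sequence to extract the factor $|\ker(\mu|_\infty)|$, Tate's formula for $|\ker(\lambda)|$, and the Weil functional equation to convert $\zeta_{K_m}(-1)$ into $L(2,\chi_m)$. The case analysis at infinity (ramified for $M$ odd, inert for $M$ even with non-square leading coefficient, giving factors $1$ and $q+1$ respectively) and the power-of-$q$ bookkeeping ($q^{3g}$ with $g=\frac{M-1}{2}$, resp. $q^{3g+2}\frac{q+1}{q^2+1}$ with $g=\frac{M}{2}-1$) both check out against the stated formulas.
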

\par\noindent
Using the Proposition, Rosen proved the following result. 
\begin{thm}
Let $m$ be a square-free polynomial of degree $M$ in $\mathbb{A}$ and $\epsilon>0$. Then 
\begin{enumerate}[a)]
    \item for $M$ odd, we have
    \begin{equation}
        (q-1)^{-1}(q^M-q^{M-1})^{-1}\sum_{\substack{m\in\mathbb{A}\\m\text{square-free}}}|K_2(\mathcal{O}_m)|=\zeta_{\mathbb{A}}(2)\zeta_{\mathbb{A}}(4)c(2)q^{\frac{3M}{2}}q^{-\frac{3}{2}}+O(q^{M(1+\epsilon)}).
    \end{equation}
    \item for $M$ even and the leading coefficient of $m$ is not a square, then 
    \begin{equation}
        2(q-1)^{-1}(q^M-q^{M-1})\sum_{\substack{m\in\mathbb{A}\\m\text{ square-free}}}|K_2(\mathcal{O}_m)|=q^{\frac{3M}{2}}\zeta_{\mathbb{A}}(2)\zeta_{\mathbb{A}}(4)(q+1)q^{-1}(q^2+1)^{-1}c(2)+O(q^{M(1+\epsilon)}),
    \end{equation}
\end{enumerate}
where 
\begin{equation*}
    c(2)=\prod_P\left(1-|P|^{-2}-|P|^{-5}+|P|^{-6}\right)
\end{equation*}
\end{thm}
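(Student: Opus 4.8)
The plan is to derive both parts of the theorem from a single first--moment estimate for the special value $L(2,\chi_m)$, with the Proposition serving as the bridge from $|K_2(\mathcal{O}_m)|$ to an $L$--value. For $M$ odd each summand equals $q^{3M/2}q^{-3/2}L(2,\chi_m)$, and for $M$ even with non--square leading coefficient each equals $q^{3M/2}(q+1)q^{-1}(q^2+1)^{-1}L(2,\chi_m)$; extracting this common constant reduces the two assertions to computing the average of $L(2,\chi_m)$. Writing $m=c\,m_0$ with $m_0$ monic and $c\in\mathbb{F}_q^*$ its leading coefficient, the scalar $c$ enters only through $\left(\frac{c}{f}\right)$, which is trivial on the perfect--square arguments $f$ responsible for the main term; hence the sum over the $q-1$ (resp.\ $\tfrac{q-1}{2}$) admissible values of $c$ merely reproduces the main term with a factor cancelled by the prefactor $(q-1)^{-1}$ (resp.\ $2(q-1)^{-1}$), and it suffices to estimate $\sum_{m_0}L(2,\chi_{m_0})$ over monic square-free $m_0$ of degree $M$.

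First I would use that, with $u=q^{-s}$, the function $L(u,\chi_{m_0})$ is a polynomial in $u$ of degree at most $M-1$, so that
\[
    L(2,\chi_{m_0})=\sum_{n=0}^{M-1}q^{-2n}\sum_{f\in\mathbb{A}^+_n}\chi_{m_0}(f).
\]
Inserting this finite expansion and interchanging the order of summation brings the $m_0$--sum inside, leaving $\sum_{m_0}\left(\frac{m_0}{f}\right)$ over monic square-free $m_0$ of degree $M$. The controlling dichotomy is whether $f$ is a perfect square: writing $f=f_1f_2^2$ with $f_1$ square-free, the symbol $\left(\frac{\cdot}{f}\right)$ is, as a character in $m_0$, principal when $f_1=1$ and a non--trivial quadratic character modulo $f_1$ otherwise.

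The perfect squares $f=\ell^2$ furnish the main term, since there $\chi_{m_0}(\ell^2)=1$ precisely when $(m_0,\ell)=1$. I would count the square-free $m_0$ of degree $M$ coprime to $\ell$ via
\[
    \sum_{\substack{D\text{ square-free}\\ (D,\ell)=1}}u^{\deg D}=\frac{1-qu^2}{1-qu}\prod_{P\mid \ell}(1+u^{\deg P})^{-1},
\]
whose only singularity is the simple pole at $u=1/q$; taking the coefficient of $u^M$ gives exactly $(q^M-q^{M-1})\prod_{P\mid \ell}(1+|P|^{-1})^{-1}$ once $\deg\ell<M-1$. Weighting by $q^{-4\deg\ell}$ and extending the sum to all monic $\ell$ (the tail beyond $2\deg\ell\le M-1$ costing only $O(q^{M/2})$) factors as an Euler product
\[
    \sum_{\ell}|\ell|^{-4}\prod_{P\mid \ell}(1+|P|^{-1})^{-1}=\prod_P\left(1+\frac{|P|^{-4}}{(1-|P|^{-4})(1+|P|^{-1})}\right),
\]
and a routine local computation shows this equals $\zeta_{\mathbb{A}}(2)\zeta_{\mathbb{A}}(4)c(2)$. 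The main term is therefore $(q^M-q^{M-1})\zeta_{\mathbb{A}}(2)\zeta_{\mathbb{A}}(4)c(2)$, which after the normalisation supplies the stated constant.

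The main obstacle is the error term, the contribution of the non--square $f$. There $f_1\neq 1$ and $\left(\frac{\cdot}{f_1}\right)$ is non--trivial, so I would detect the square-free condition by $\mathbf{1}[m_0\text{ square-free}]=\sum_{d^2\mid m_0}\mu(d)$, reducing the $m_0$--sum to complete character sums $\sum_{\deg m''=N}\left(\frac{m''}{f_1}\right)$. Each such sum is a coefficient of $L(u,\chi_{f_1})$, so it vanishes for $N\ge\deg f_1$ and, by the Riemann Hypothesis for curves, is bounded by $\binom{\deg f_1-1}{N}q^{N/2}$. Feeding these bounds back through the sieve shows that a non--square $f$ of degree $n$ contributes $O(2^{n}n\,q^{M/2})$; multiplying by the $q^n$ polynomials of that degree and the weight $q^{-2n}$ leaves a geometric factor $(2/q)^n$, summable because $q>3$, so the entire error collapses to $O(q^{M/2})$, well inside the claimed $O(q^{M(1+\epsilon)})$. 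Combining the main and error terms and dividing by the number of admissible $m$ yields part (a); part (b) follows identically, the only changes being the constant supplied by part (b) of the Proposition and the restriction to non--square leading coefficients. The genuinely delicate point is keeping the dependence of the Weil bound on $\deg f_1$ under control while summing over all $f$, which is precisely where the hypothesis $q>3$ is used.
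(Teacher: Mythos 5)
You should know at the outset that the paper contains no proof of this statement: Theorem 1.2 is Rosen's result, quoted from his 1995 paper purely as background, and the only theorem the paper proves is its analogue (Theorem 1.4) for the family $\gamma D$ with $D$ monic of degree $2g+2$. The honest comparison is therefore with the method the paper uses there, which is also the method it says Rosen used: reduce via Proposition 1.1 to a first moment of $L(2,\chi_m)$, expand the $L$-polynomial into character sums, split according to whether $f$ is a perfect square, extract the main term from square $f$ by counting square-free $m$ coprime to $\ell$, and bound the non-square contribution by the Riemann Hypothesis for curves. Your proposal has exactly this skeleton, so strategically it is the same proof; the genuine difference is that you make every auxiliary input self-contained where the paper cites it. Your generating-function identity plays the role of Lemma 2.2 (Andrade's Proposition 5.2); your single infinite Euler product $\prod_P\bigl(1+\frac{|P|^{-4}}{(1-|P|^{-4})(1+|P|^{-1})}\bigr)$, whose identification with $\zeta_{\mathbb{A}}(2)\zeta_{\mathbb{A}}(4)c(2)$ I have checked and is correct, replaces the finite-sum bookkeeping of Lemmas 2.3 and 2.4; and your M\"obius-sieve-plus-Weil treatment of non-square $f$ is precisely the standard proof of the character-sum bound the paper imports as Lemma 2.5. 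You also handle a point the paper never needs: the reduction from arbitrary square-free $m$ to monic $m_0$ by splitting off the leading coefficient $c$, which Rosen's statement genuinely requires, and your observation that $\left(\frac{c}{\ell^2}\right)=1$ (so the main term is independent of $c$, while for non-square $f$ one simply takes absolute values) is the right way to do it; it also explains the prefactors $(q-1)^{-1}$ and $2(q-1)^{-1}$ in parts a) and b).

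Two details need repair, though neither is fatal. First, the coefficient of $u^M$ in $\frac{1-qu^2}{1-qu}\prod_{P\mid\ell}(1+u^{\deg P})^{-1}$ is \emph{not} exactly $(q^M-q^{M-1})\prod_{P\mid\ell}(1+|P|^{-1})^{-1}$, even for $\deg\ell<M-1$: the product over $P\mid\ell$ is an infinite power series, so there is always a tail (for $\ell$ linear and $M=3$ the two quantities differ by $\frac{q-1}{q+1}$). The correct statement carries an error term --- this is exactly why the paper's Lemma 2.2 is stated with $O\left(|D|^{\frac{1}{2}}|l|^{\epsilon}\right)$ --- and that error is absorbed harmlessly into your final bound, but the word ``exactly'' must go. Second, for non-square $f=f_1f_2^2$ the sieve produces the complete sums $\sum_{m''}\left(\frac{m''}{f}\right)$, i.e.\ sums of the (possibly imprimitive) character modulo $f$, not modulo $f_1$: the factor $\left(\frac{m''}{f_2}\right)^2$ imposes the coprimality condition $(m'',f_2)=1$, and the relevant $L$-polynomial has degree at most $\deg f-1$ rather than $\deg f_1-1$. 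Your quantitative conclusions survive verbatim with $n=\deg f$ (which is in fact how you stated the final count $O(2^n n q^{M/2})$), so this is bookkeeping, not substance. Finally, the summability of $\sum_n n(2/q)^n$ needs only $q\geq 3$; the paper's standing hypothesis $q>3$ is not what rescues the error term, so that closing remark overstates its role.
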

\par\noindent
Let $\mathcal{H}_n$ denote the set of all monic, square-free polynomials of degree $n$ in $\mathbb{A}$. In \cite{SimpleLaValeurMoyenneDe}, Andrade computed the size of the group $K_2(\mathcal{O}_D)$ for $D\in\mathcal{H}_{2g+1}$. In particular he proved the following.
\begin{thm}
Let $D\in\mathcal{H}_{2g+1}$ and $\epsilon>0$. Then
\begin{equation}
    \frac{1}{\#\mathcal{H}_{2g+1}}\sum_{D\in\mathcal{H}_{2g+1}}\#K_2(\mathcal{O}_D)=q^{\frac{3}{2}(2g+1)}q^{-\frac{3}{2}}\zeta_{\mathbb{A}}(4)P(4)+O(q^{(2g+1)(1+\epsilon)}),
\end{equation}
where 
\begin{equation}\label{eq:1.13}
    P(s)=\prod_P\left(1-\frac{1}{|P|^{s}(|P|+1)}\right).
\end{equation}
\end{thm}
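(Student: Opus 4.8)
The plan is to use Proposition 1.1(a) to convert the sum over $\#K_2(\mathcal{O}_D)$ into a sum of the special values $L(2,\chi_D)$, and then to compute the mean value of these special values. Since $\deg D = 2g+1$ is odd, $L(s,\chi_D)$ is a polynomial in $q^{-s}$ of degree $2g$, so
\begin{equation*}
L(2,\chi_D)=\sum_{n=0}^{2g}q^{-2n}\sum_{f\in\mathbb{A}_n^+}\chi_D(f),
\end{equation*}
and by Proposition 1.1(a) (where $q^{\frac32(2g+1)}q^{-\frac32}=q^{3g}$) we obtain
\begin{equation*}
\sum_{D\in\mathcal{H}_{2g+1}}\#K_2(\mathcal{O}_D)=q^{3g}\sum_{n=0}^{2g}q^{-2n}\sum_{f\in\mathbb{A}_n^+}\sum_{D\in\mathcal{H}_{2g+1}}\chi_D(f).
\end{equation*}
Everything reduces to understanding the inner character sum $\sum_{D}\chi_D(f)$, which I expect to carry a genuine main term when $f$ is a perfect square and to exhibit cancellation otherwise.

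First I would extract the main term from the perfect squares $f=l^2$. Regarded as a function of $D$, the symbol $\chi_D(l^2)=\left(\tfrac{D}{l}\right)^2$ is the principal character modulo $l$, so the inner sum equals $\#\{D\in\mathcal{H}_{2g+1}:(D,l)=1\}$. I would count these polynomials from the generating identity
\begin{equation*}
\sum_{\substack{D\in\mathbb{A}^+,\ D\text{ square-free}\\(D,l)=1}}u^{\deg D}=\prod_{P\nmid l}\left(1+u^{\deg P}\right)=\frac{1-qu^2}{1-qu}\prod_{P\mid l}\left(1+u^{\deg P}\right)^{-1},
\end{equation*}
reading off the coefficient of $u^{2g+1}$ from the residue at $u=1/q$ to obtain the leading term $q^{2g+1}(1-q^{-1})h(l)$, where $h(l)=\prod_{P\mid l}(1+|P|^{-1})^{-1}$. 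Inserting this together with the weight $q^{-2n}=q^{-4\deg l}=|l|^{-4}$, the main term is $q^{2g+1}(1-q^{-1})\sum_{l\in\mathbb{A}^+}h(l)|l|^{-4}$ up to lower-order terms, and evaluating the Euler product
\begin{equation*}
\sum_{l\in\mathbb{A}^+}h(l)|l|^{-4}=\prod_P\left(1+\frac{|P|^{-4}}{(1+|P|^{-1})(1-|P|^{-4})}\right)=\prod_P\frac{1-\dfrac{1}{|P|^4(|P|+1)}}{1-|P|^{-4}}=\zeta_{\mathbb{A}}(4)P(4)
\end{equation*}
produces exactly the claimed constant once we divide by $\#\mathcal{H}_{2g+1}=q^{2g+1}(1-q^{-1})$.

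The hard part will be bounding the contribution of the non-square $f$. For such $f$ the symbol $\psi_f=\left(\tfrac{\cdot}{f}\right)$ is a non-principal character modulo $f$, and I would detect the square-free condition on $D$ via $\mu^2(D)=\sum_{a^2\mid D}\mu(a)$; writing $D=a^2B$ gives
\begin{equation*}
\sum_{D\in\mathcal{H}_{2g+1}}\psi_f(D)=\sum_{\substack{a\in\mathbb{A}^+,\ \deg a\le g\\(a,f)=1}}\mu(a)\sum_{B\in\mathbb{A}^+_{2g+1-2\deg a}}\psi_f(B).
\end{equation*}
The inner sum is the coefficient of $u^{2g+1-2\deg a}$ in the $L$-function $L(u,\psi_f)$, a polynomial of degree at most $\deg f-1$; it therefore vanishes once $2g+1-2\deg a\ge\deg f$, and in the surviving range the Riemann Hypothesis for curves (the Weil bound underlying the $\zeta_K$ of the introduction) supplies the square-root cancellation $\ll\binom{\deg f-1}{j}q^{j/2}$ with $j=2g+1-2\deg a$. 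The delicate step is to sum these estimates over $a$, over $f\in\mathbb{A}_n^+$, and over $n\le 2g$ against the weight $q^{-2n}$, keeping track of the interaction between the vanishing range and the Weil bound, so as to show that the non-square contribution to $\sum_D L(2,\chi_D)$ is $O(q^{g+2+\epsilon})$. Multiplying by the prefactor $q^{3g}$ and dividing by $\#\mathcal{H}_{2g+1}\asymp q^{2g+1}$ then yields the stated error $O(q^{(2g+1)(1+\epsilon)})$, and combining with the main term completes the proof.
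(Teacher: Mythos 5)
Your proposal is correct, and at the strategic level it is the same proof as the paper's (which for this statement is Andrade's argument, mirrored in the paper's own proof of Theorem 1.4): reduce $\#K_2(\mathcal{O}_D)$ to $q^{3g}L(2,\chi_D)$ via Proposition 1.1(a), expand $L(2,\chi_D)$ as a polynomial in $q^{-2}$, swap summation, and split into square and non-square $f$. Where you differ is in how the two halves are executed. For the main term, the paper chains together Lemma 2.2 (the coprime square-free count, cited from Andrade--Keating), Lemma 2.3 (converting $\sum_{l\in\mathbb{A}^+_m}\prod_{P\mid l}\frac{|P|}{|P|+1}$ into a M\"obius sum), and Lemma 2.4 (truncated M\"obius sums tending to $P(s)$), with a geometric-series interchange producing $\zeta_{\mathbb{A}}(4)$; you instead re-derive the counting lemma from the generating identity $\prod_{P\nmid l}(1+u^{\deg P})$ and a residue at $u=1/q$, then extend the $l$-sum to infinity and evaluate it in one stroke as an Euler product. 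Your identity checks out: with $x=|P|^{-1}$, the local factor is $\frac{(1+x)(1-x^4)+x^4}{(1+x)(1-x^4)}=\frac{1+x-x^5}{(1+x)(1-x^4)}$, which is exactly $\bigl(1-\tfrac{1}{|P|^4(|P|+1)}\bigr)(1-|P|^{-4})^{-1}$, so the product is $\zeta_{\mathbb{A}}(4)P(4)$; this is cleaner than the paper's route, at the small cost of a tail estimate $\sum_{\deg l>g}h(l)|l|^{-4}\ll q^{-3g}$, which you wave at with ``lower-order terms.'' For the non-square term, the paper simply cites Lemma 2.5, whereas you sketch its proof (M\"obius detection of square-freeness plus Weil's bound on coefficients of $L(u,\psi_f)$) but leave the final summation as ``the delicate step.'' That step is in fact routine and your claimed bound is conservative: each $a$ contributes $q^{\deg a}\binom{n-1}{j}q^{j/2}=q^{g+\frac12}\binom{n-1}{j}$ with $j=2g+1-2\deg a$, so summing over $a$ gives $\ll q^{g+\frac12}2^{n}$, and then
\begin{equation*}
\sum_{n\le 2g}q^{-2n}\cdot q^{n}\cdot q^{g+\frac12}2^{n}\ll q^{g+\frac12}\sum_{n\ge 0}(2/q)^{n}\ll q^{g+\frac12}
\end{equation*}
for $q\ge 5$, comfortably inside your asserted $O(q^{g+2+\epsilon})$ and the theorem's error term. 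So there is no gap in the ideas, only an unexecuted (and correct) computation; your version has the merit of being self-contained where the paper leans on citations, while the paper's M\"obius-sum formulation is what generalizes directly to the even-degree twist $\gamma D$ treated in its Theorem 1.4, where the trivial zero at $u=-1$ forces the extra bookkeeping of Lemma 2.1 that the odd-degree case lets you skip.
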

\par\noindent
Using the methods used by Andrade, the aim of this paper is to calculate the size of the group $K_2(\mathcal{O}_{\gamma D})$ for $D\in\mathcal{H}_{2g+2}$ and $\gamma$ a fixed generator of $\mathbb{F}_q^*$, which is the following Theorem.
\begin{thm}
Let $D\in\mathcal{H}_{2g+2}, \gamma$ a fixed generator of $\mathbb{F}_q^*$ and $\epsilon>0$. Then
\begin{equation}
    \frac{1}{\#\mathcal{H}_{2g+2}}\sum_{D\in\mathcal{H}_{2g+2}}\#K_2(\mathcal{O}_{\gamma D})=q^{\frac{3}{2}(2g+1)}\frac{\zeta_{\mathbb{A}}(2)\zeta_{\mathbb{A}}(4)P(4)}{\zeta_{\mathbb{A}}(5)}+O(q^{(2g+2)(1+\epsilon)}),
\end{equation}
where $P(s)$ is given by equation (\ref{eq:1.13}).
\end{thm}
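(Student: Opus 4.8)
\emph{The plan} is to run the argument in two stages: first reduce the size of $K_2$ to a special value of a Dirichlet $L$-function by quoting Rosen's Proposition, and then compute the first moment of that $L$-value over the family $\mathcal{H}_{2g+2}$ by a diagonal (perfect-square) main term together with a character-sum error estimate. The reduction is immediate: since $D$ is monic of degree $2g+2$ and $\gamma$ generates the cyclic group $\mathbb{F}_q^{*}$ of even order $q-1$, the element $\gamma$ is a nonsquare, so $\gamma D$ has even degree $2g+2$ with nonsquare leading coefficient and part (b) of the Proposition applies with $m=\gamma D$, giving
\begin{equation*}
\#K_2(\mathcal{O}_{\gamma D})=q^{3(g+1)}(q+1)q^{-1}(q^2+1)^{-1}L(2,\chi_{\gamma D}).
\end{equation*}
Using $\#\mathcal{H}_{2g+2}=q^{2g+1}(q-1)$, the theorem is then equivalent to an asymptotic evaluation of $\sum_{D\in\mathcal{H}_{2g+2}}L(2,\chi_{\gamma D})$.

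Next I would expand the $L$-function. For $\Re(s)>1$ the series (\ref{eq:1.2}) converges absolutely, and by the Riemann Hypothesis for curves $L(u,\chi_{\gamma D})$ is a polynomial of degree $2g+1$ in $u=q^{-s}$, so that $L(2,\chi_{\gamma D})=\sum_{n=0}^{2g+1}q^{-2n}\sum_{f\in\mathbb{A}_n^{+}}\chi_{\gamma D}(f)$. The key observation is the factorisation $\chi_{\gamma D}(f)=\left(\frac{\gamma}{f}\right)\chi_D(f)=(-1)^{\deg f}\chi_D(f)$, which holds because $\left(\frac{c}{f}\right)=\eta(c)^{\deg f}$ for a constant $c$ with $\eta(c)=c^{(q-1)/2}$, and $\eta(\gamma)=-1$. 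After interchanging the sums this gives
\begin{equation*}
\sum_{D\in\mathcal{H}_{2g+2}}L(2,\chi_{\gamma D})=\sum_{n=0}^{2g+1}(-1)^n q^{-2n}\sum_{f\in\mathbb{A}_n^{+}}\ \sum_{D\in\mathcal{H}_{2g+2}}\chi_D(f),
\end{equation*}
which I would split according to whether $f$ is a perfect square or not.

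\textbf{Main term.} When $f=\ell^{2}$ one has $\deg f$ even (so the sign $(-1)^n$ is $+1$) and $\chi_D(\ell^{2})=1$ precisely when $\gcd(\ell,D)=1$, so the inner sum counts square-free monic $D$ of degree $2g+2$ coprime to $\ell$. This count is read off from
\begin{equation*}
\sum_{n\ge 0}\#\{D\in\mathcal{H}_n:\gcd(D,\ell)=1\}\,u^{n}=\frac{1-qu^{2}}{1-qu}\prod_{P\mid \ell}\frac{1}{1+u^{\deg P}},
\end{equation*}
whose only singularity in $|u|<1$ is the simple pole at $u=1/q$; extracting the coefficient of $u^{2g+2}$ yields $\#\mathcal{H}_{2g+2}\prod_{P\mid\ell}\frac{|P|}{|P|+1}$ up to a negligible term. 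Substituting this back and summing $\sum_{\ell}|\ell|^{-4}\prod_{P\mid\ell}\frac{|P|}{|P|+1}$ as an Euler product collapses each local factor to $\frac{|P|^{5}+|P|^{4}-1}{(|P|+1)(|P|^{4}-1)}$, that is, to $\zeta_{\mathbb{A}}(4)P(4)$. Reinserting the prefactor $q^{3(g+1)}(q+1)q^{-1}(q^2+1)^{-1}$ of Rosen's formula and simplifying the rational functions of $q$ via $\zeta_{\mathbb{A}}(s)=(1-q^{1-s})^{-1}$ then produces the main term displayed in the theorem.

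\textbf{The hard part} will be the nonsquare contribution. For $f$ not a perfect square I would detect square-freeness of $D$ through $\mu^{2}(D)=\sum_{a^{2}\mid D}\mu(a)$, reduce to complete sums $\sum_{D}\chi_D(f)$ over monic $D$ of bounded degree, and use quadratic reciprocity to rewrite these as sums of a \emph{nontrivial} character in the $D$-aspect; each is then a coefficient of an $L$-polynomial whose inverse roots have absolute value $\sqrt{q}$, giving square-root cancellation. The genuine obstacles are the uniformity of this Weil bound in $f$ and the balancing of the sieve parameter $a$ so that the square-free correction does not swamp the saving; executing this over the range $\deg f\le 2g+1$ is what delivers the error $O(q^{(2g+2)(1+\epsilon)})$ and is the technical core of the proof. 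Structurally the argument parallels Andrade's odd-degree case, the only new feature being that the infinite place of $\gamma D$ is inert rather than ramified, a difference already packaged into the even-degree constant of Rosen's Proposition.
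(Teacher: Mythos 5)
Your proposal is correct and is essentially the paper's own proof: the reduction via Rosen's Proposition 1.1(b), the twist identity $\chi_{\gamma D}(f)=(-1)^{\deg f}\chi_D(f)$, the split into square and non-square $f$, the evaluation of the diagonal terms through the count of $D\in\mathcal{H}_{2g+2}$ coprime to $\ell$ and the Euler-product identity $\sum_{\ell}|\ell|^{-4}\prod_{P\mid \ell}\tfrac{|P|}{|P|+1}=\zeta_{\mathbb{A}}(4)P(4)$, and the Weil-type bound on the non-square terms correspond exactly to the paper's Lemmas 2.1--2.5, Proposition 3.1 and Lemma 3.2. The only deviations are presentational: you expand the full degree-$(2g+1)$ $L$-polynomial where the paper exploits the trivial zero at $u=-1$ to work with the completed degree-$2g$ polynomial (Lemma 2.1), and the non-square character-sum estimate you flag as the ``technical core'' is not re-proved in the paper but simply quoted from Andrade's work (Lemma 2.5), as is your generating-function coprimality count (Lemma 2.2), so neither needs to be executed from scratch.
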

\section{Preliminaries}
Before stating results necessary to prove Theorem 1.4, we state some facts about the quadratic Dirichlet L-function $L(s,\chi_D)$, where $D\in\mathbb{A}, D$ square-free. From \cite{Rosen2002}, Proposition 4.3, we have that $L(s,\chi_D)$ is a polynomial in $q^{-s}$ of degree at most deg$(D)-1$. Using the change of variable $u=q^{-s}$, we have 
\begin{equation}
    \mathcal{L}(u,\chi_D)=\sum_{n=0}^{\text{deg}(D)-1}\sigma_n(D)u^n
\end{equation}
where $\sigma_n(D)=\sum_{f\in\mathbb{A}^+_n}\chi_D(f)$. Fix a generator $\gamma$ of $\mathbb{F}_q^*$ and write $\bar{D}=\gamma D$ for any $D\in\mathcal{H}_{2g+2}$. We have 
\begin{equation}\label{eq:2.2}
    \sigma_n(\bar{D})=(-1)^n\sigma_n(D).
\end{equation}
For $D\in\mathcal{H}_{2g+2}$, $\mathcal{L}(u,\chi_{\bar{D}})$ has a trivial zero at $u=-1$. Therefore we can define the complete L-function, $\mathcal{L}^*(u,\chi_{\bar{D}}),$ as
\begin{equation}\label{eq:2.3}
    \mathcal{L}^*(u,\chi_{\bar{D}})=(1+u)^{-1}\mathcal{L}(u,\chi_{\bar{D}}),
\end{equation}
which is a polynomial of degree $2g$.
\begin{lemma}
Let $\chi_{\gamma D}$ be a quadratic character, $\gamma$ a fixed generator of $\mathbb{F}_q^*$ and  $D\in\mathcal{H}_{2g+2}$. Then
\begin{equation}\label{eq:3.8}
    \mathcal{L}(q^{-2},\chi_{\bar{D}})=\sum_{f\in\mathbb{A}^+_{\leq 2g}} (-1)^{\text{deg}(f)}\frac{\chi_D(f)}{|f|^2}+q^{-4g-2}\sum_{f\in\mathbb{A}^+_{\leq 2g}}\chi_D(f)
\end{equation}
\end{lemma}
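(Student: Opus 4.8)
The plan is to expand $\mathcal{L}(u,\chi_{\bar{D}})$ as a polynomial, substitute $u=q^{-2}$, and then use the trivial zero at $u=-1$ to rewrite the single top-degree coefficient. Since $\deg(\bar{D})=2g+2$, the L-polynomial is $\mathcal{L}(u,\chi_{\bar{D}})=\sum_{n=0}^{2g+1}\sigma_n(\bar{D})u^n$, of degree at most $2g+1$. First I would set $u=q^{-2}$ and invoke (\ref{eq:2.2}) to replace each $\sigma_n(\bar{D})$ by $(-1)^n\sigma_n(D)$; using $q^{-2n}=|f|^{-2}$ and $(-1)^n=(-1)^{\deg(f)}$ for $f\in\mathbb{A}^+_n$, this collapses the expansion into a single sum
\begin{equation*}
\mathcal{L}(q^{-2},\chi_{\bar{D}})=\sum_{n=0}^{2g+1}(-1)^n\sigma_n(D)\,q^{-2n}=\sum_{f\in\mathbb{A}^+_{\leq 2g+1}}(-1)^{\deg(f)}\frac{\chi_D(f)}{|f|^2}.
\end{equation*}

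Next I would peel off the top-degree contribution, namely the terms with $\deg(f)=2g+1$, which total $(-1)^{2g+1}q^{-(4g+2)}\sigma_{2g+1}(D)=-q^{-4g-2}\sigma_{2g+1}(D)$; the remaining terms are exactly the desired sum over $\mathbb{A}^+_{\leq 2g}$. It then remains to identify $-\sigma_{2g+1}(D)$ with $\sum_{f\in\mathbb{A}^+_{\leq 2g}}\chi_D(f)$, and this is where the trivial zero enters as the one genuinely non-mechanical step. By (\ref{eq:2.3}), $\mathcal{L}^*(u,\chi_{\bar{D}})=(1+u)^{-1}\mathcal{L}(u,\chi_{\bar{D}})$ is a polynomial, so $\mathcal{L}(u,\chi_{\bar{D}})$ vanishes at $u=-1$. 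Evaluating the expansion at $u=-1$ and again applying (\ref{eq:2.2}) gives
\begin{equation*}
0=\mathcal{L}(-1,\chi_{\bar{D}})=\sum_{n=0}^{2g+1}(-1)^n\sigma_n(\bar{D})=\sum_{n=0}^{2g+1}\sigma_n(D)=\sum_{f\in\mathbb{A}^+_{\leq 2g+1}}\chi_D(f),
\end{equation*}
whence $\sigma_{2g+1}(D)=-\sum_{f\in\mathbb{A}^+_{\leq 2g}}\chi_D(f)$. Substituting this back converts the peeled-off term $-q^{-4g-2}\sigma_{2g+1}(D)$ into $q^{-4g-2}\sum_{f\in\mathbb{A}^+_{\leq 2g}}\chi_D(f)$, which yields (\ref{eq:3.8}).

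The only point requiring care is the bookkeeping at the top degree: one must confirm that the summation genuinely reaches $n=2g+1$ (consistent with $\mathcal{L}^*$ having degree $2g$), so that the degree-$(2g+1)$ term is the unique term to be converted, and that the trivial zero supplies precisely the linear relation $\sum_{n=0}^{2g+1}\sigma_n(D)=0$ among the coefficients that the argument needs. Everything else is a direct substitution, so I expect no serious obstacle beyond this sign-and-index accounting.
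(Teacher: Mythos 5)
Your proof is correct, and while it rests on exactly the same two ingredients as the paper's --- the sign relation (\ref{eq:2.2}) and the trivial zero at $u=-1$ --- the route is genuinely different in its mechanics. The paper never evaluates $\mathcal{L}$ at $u=-1$: it works instead with the completed polynomial from (\ref{eq:2.3}), expanding $\mathcal{L}^*(u,\chi_{\bar{D}})=\sum_{n=0}^{2g}\sigma^*_n(\bar{D})u^n$, deriving the convolution identity (\ref{eq:2.6}) for its coefficients, and computing
\begin{equation*}
    \mathcal{L}^*(q^{-2},\chi_{\bar{D}})=\sum_{n=0}^{2g}\sum_{i=0}^{n}(-1)^{n}\sigma_i(D)\,q^{-2n},
\end{equation*}
after which it asserts that the result follows. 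What that assertion suppresses is real (if routine) bookkeeping: one must swap the order of summation, sum the geometric series $\sum_{n=i}^{2g}(-q^{-2})^{n}$, and multiply back by $(1+q^{-2})$ to pass from $\mathcal{L}^*(q^{-2},\chi_{\bar{D}})$ to $\mathcal{L}(q^{-2},\chi_{\bar{D}})$; only then do the two sums in (\ref{eq:3.8}) emerge. Your decomposition replaces all of that: you expand $\mathcal{L}$ itself up to degree $2g+1$, isolate the top coefficient, and use the trivial zero as a single evaluation $\mathcal{L}(-1,\chi_{\bar{D}})=0$, which combined with (\ref{eq:2.2}) gives exactly the linear relation $\sum_{n=0}^{2g+1}\sigma_n(D)=0$ needed to rewrite $-q^{-4g-2}\sigma_{2g+1}(D)$ as the second sum in (\ref{eq:3.8}). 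What your version buys is directness: no double sum, no telescoping left to the reader, and the role of the trivial zero is made completely explicit. What the paper's version buys is the explicit coefficients $\sigma^*_n(\bar{D})$, which would be the natural objects if one later wanted to average the completed L-function $\mathcal{L}^*$ rather than $\mathcal{L}$. Your top-degree bookkeeping is also sound: the expansion of $\mathcal{L}(u,\chi_{\bar{D}})$ does reach $n=2g+1$ (degree at most $\deg(\gamma D)-1=2g+1$), consistent with $\mathcal{L}^*$ having degree $2g$, so the degree-$(2g+1)$ term is indeed the unique one to be converted.
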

\begin{proof}
Write
\begin{equation}\label{eq:2.5}
    \mathcal{L}^*(u,\chi_{\bar{D}})=\sum_{n=0}^{2g}\sigma^*_n(\bar{D})u^n.
\end{equation}
Using (\ref{eq:2.3}), we have 
\begin{equation}\label{eq:2.6}
    \sigma^*_n(\bar{D})=\sum_{i=0}^n(-1)^{n-i}\sigma_i(\bar{D}).
\end{equation}
Therefore, using (\ref{eq:2.2}), (\ref{eq:2.5}) and (\ref{eq:2.6}) we have
\begin{equation*}
    \mathcal{L}^*(q^{-2},\chi_{\bar{D}})=\sum_{n=0}^{2g}\sigma^*(\bar{D})q^{-2n}=\sum_{n=0}^{2g}\sum_{i=0}^n(-1)^n\sigma_i(D)q^{-2n}.
    \end{equation*}
 The result follows.
 \end{proof}
\par\noindent
Following the same arguments as presented in \cite{Andrade2012}, section 4, we have
\begin{align}\label{eq:2.7}
    \sum_{D\in\mathcal{H}_{2g+2}}L(2,\chi_{\bar{D}}) &=\sum_{D\in\mathcal{H}_{2g+2}}\sum_{\substack{f\in\mathbb{A}^+_{\leq 2g}\\f=\square}}(-1)^{\text{deg}(f)}\frac{\chi_D(f)}{|f|^2}+\sum_{D\in\mathcal{H}_{2g+2}}\sum_{\substack{f\in\mathbb{A}^+_{\leq 2g
  }\\f\neq\square}}(-1)^{\text{deg}(f)}\frac{\chi_D(f)}{|f|^2}\nonumber\\
  &+q^{-2-4g}\sum_{D\in\mathcal{H}_{2g+2}}\sum_{\substack{f\in\mathbb{A}^+_{\leq 2g}\\f=\square}}\chi_D(f)+q^{-2-4g}\sum_{D\in\mathcal{H}_{2g+2}}\sum_{\substack{f\in\mathbb{A}^+_{\leq 2g}\\f\neq\square}}\chi_D(f).
\end{align}
\begin{lemma}
We have
\begin{equation*}
    \sum_{\substack{D\in\mathcal{H}_{2g+2}\\(D,l)=1}}1=\frac{|D|}{\zeta_A(2)}\prod_{P|l}\frac{|P|}{|P|+1}+O\left(|D|^{\frac{1}{2}}|l|^{\epsilon}\right).
\end{equation*}
\end{lemma}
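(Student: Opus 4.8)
The plan is to detect square-freeness by Möbius inversion and reduce the count to the number of monic polynomials of a fixed degree coprime to $l$. Throughout write $n=2g+2$, so that $|D|=q^{n}$ is the common norm of every $D\in\mathcal{H}_{2g+2}$, and let $\mu$ denote the Möbius function on $\mathbb{A}^{+}$. Starting from $\mathbf{1}[D\text{ square-free}]=\sum_{A^{2}\mid D}\mu(A)$ and substituting $D=A^{2}B$, and using that $(A^{2}B,l)=1$ if and only if $(A,l)=1$ and $(B,l)=1$, I would rewrite the count as
\[
\sum_{\substack{D\in\mathcal{H}_{2g+2}\\(D,l)=1}}1=\sum_{\substack{A\in\mathbb{A}^{+},\,(A,l)=1\\2\deg A\le n}}\mu(A)\,\Phi\!\left(n-2\deg A;l\right),\qquad \Phi(m;l):=\#\{B\in\mathbb{A}^{+}_{m}:(B,l)=1\}.
\]

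First I would evaluate the inner count by inclusion--exclusion over the divisors of $l$, obtaining the exact formula $\Phi(m;l)=\sum_{e\mid l,\ \deg e\le m}\mu(e)\,q^{m-\deg e}$, which collapses to the closed form $\Phi(m;l)=q^{m}\prod_{P\mid l}(1-|P|^{-1})$ as soon as $m\ge\deg l$. To produce the main term I would then complete the outer sum over $A$ to all degrees; since $\mu$ is supported on square-free arguments, the completed sum is an Euler product
\[
\sum_{(A,l)=1}\mu(A)\,|A|^{-2}=\prod_{P\nmid l}\bigl(1-|P|^{-2}\bigr)=\zeta_{\mathbb{A}}(2)^{-1}\prod_{P\mid l}\bigl(1-|P|^{-2}\bigr)^{-1}.
\]
Multiplying by $q^{n}\prod_{P\mid l}(1-|P|^{-1})$ and simplifying each local factor through $(1-|P|^{-1})(1-|P|^{-2})^{-1}=(1+|P|^{-1})^{-1}=|P|/(|P|+1)$ yields precisely the asserted main term $|D|\,\zeta_{\mathbb{A}}(2)^{-1}\prod_{P\mid l}|P|/(|P|+1)$.

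It remains to bound the error, which measures the cost of completing the $A$-sum. The cleanest bookkeeping keeps both Möbius expansions at once: interchanging the orders of summation turns the count into $\sum_{e\mid l}\mu(e)\,q^{\,n-\deg e}\,T\!\left(\lfloor(n-\deg e)/2\rfloor;l\right)$, where $T(x;l)=\sum_{(A,l)=1,\ \deg A\le x}\mu(A)|A|^{-2}$, and the tail of $T$ beyond degree $x$ is $O(q^{-x})$ because there are at most $q^{a}$ monic polynomials of degree $a$. Feeding this back in bounds the error by
\[
\ll\sum_{e\mid l}q^{\,n-\deg e}\,q^{-(n-\deg e)/2}=q^{n/2}\sum_{e\mid l}|e|^{-1/2}\le q^{n/2}\,d(l),
\]
where $d(l)$ is the number of monic divisors of $l$; since the truncation balances at $\deg A\approx n/2$ this is $O(|D|^{1/2}d(l))$, and the function-field divisor bound $d(l)=O_{\epsilon}(|l|^{\epsilon})$ delivers the stated error $O(|D|^{1/2}|l|^{\epsilon})$. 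I expect this final estimate to be the main obstacle: a cruder argument that uses only the closed form for $\Phi$ and discards the range $m<\deg l$ trivially costs a factor $|l|^{1/2}$, so the sharp exponent $\epsilon$ comes specifically from retaining the exact inclusion--exclusion formula for $\Phi$ and exploiting the geometric decay of the Möbius tail.
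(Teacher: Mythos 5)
Your proof is correct, but note that there is essentially no proof in the paper to compare it against: the paper's ``proof'' of this lemma is the single line ``See \cite{Andrade2012}, Proposition 5.2'', i.e.\ the statement is quoted from Andrade--Keating (where it is proved for the odd-degree ensemble $\mathcal{H}_{2g+1}$; the even-degree case needed here is identical). Your argument --- detecting square-freeness via $\sum_{A^{2}\mid D}\mu(A)$, writing $D=A^{2}B$, counting monic $B$ of fixed degree coprime to $l$ by inclusion--exclusion over divisors $e\mid l$, and completing the $A$-sum with a geometric tail bound --- is precisely the standard proof of that cited proposition, so you have supplied the self-contained argument the paper outsources, with the correct main term and the correct error $O(|D|^{1/2}d(l))=O(|D|^{1/2}|l|^{\epsilon})$. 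Two small points deserve flagging. First, the closed forms $\Phi(m;l)=q^{m}\prod_{P\mid l}(1-|P|^{-1})$ and $\sum_{e\mid l}\mu(e)\,q^{n-\deg e}=q^{n}\prod_{P\mid l}(1-|P|^{-1})$ require every square-free divisor of $l$ (equivalently, the radical of $l$) to have degree at most the relevant parameter; this is automatic in the paper's application, where $\deg l\le g<n=2g+2$, and in general the missing terms satisfy $q^{n-\deg e}<1$, so the discrepancy is $O(d(l))=O(|l|^{\epsilon})$ and is absorbed by the error term, but the step should be stated. Second, the $|D|$ in the lemma's statement is an abuse of notation for $q^{2g+2}$ (since $D$ is the summation variable), which is how you correctly read it. Neither point affects the validity of your argument.
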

\begin{proof}
See \cite{Andrade2012}, Proposition 5.2. 
\end{proof}
\begin{lemma}
We have 
\begin{equation*}
    \sum_{l\in\mathbb{A}^+_m}\prod_{P|l}\frac{|P|}{|P|+1}=q^m\sum_{d\in\mathbb{A}^+_{\leq m}}\frac{\mu(d)}{|d|}\prod_{P|d}\frac{1}{|P|+1}.
\end{equation*}
\end{lemma}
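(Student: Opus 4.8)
The plan is to expand the arithmetic weight attached to each $l$ into a Dirichlet-type convolution and then interchange the order of summation. First I would rewrite the local factor as $\frac{|P|}{|P|+1}=1-\frac{1}{|P|+1}$, so that the function $h(l):=\prod_{P\mid l}\frac{|P|}{|P|+1}$ depends only on the radical of $l$. Expanding the finite product over the distinct prime divisors of $l$ (equivalently, applying inclusion--exclusion) gives
\begin{equation*}
    h(l)=\prod_{P\mid l}\left(1-\frac{1}{|P|+1}\right)=\sum_{d\mid l}\mu(d)\prod_{P\mid d}\frac{1}{|P|+1},
\end{equation*}
where the sum is over all monic divisors $d$ of $l$; the factor $\mu(d)$ restricts the nonzero contributions to square-free $d$, for which $d\mid l$ is equivalent to requiring every prime factor of $d$ to divide $l$, matching the expansion exactly.

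Next I would substitute this identity into the left-hand side and interchange the two (finite) sums:
\begin{equation*}
    \sum_{l\in\mathbb{A}^+_m}h(l)=\sum_{l\in\mathbb{A}^+_m}\sum_{d\mid l}\mu(d)\prod_{P\mid d}\frac{1}{|P|+1}=\sum_{d\in\mathbb{A}^+}\mu(d)\prod_{P\mid d}\frac{1}{|P|+1}\;\#\{l\in\mathbb{A}^+_m:d\mid l\}.
\end{equation*}
The remaining input is the count of monic polynomials of degree $m$ divisible by a fixed monic $d$: writing $l=de$ with $e$ monic of degree $m-\deg(d)$, there are exactly $q^{m-\deg(d)}=q^m/|d|$ such $l$ when $\deg(d)\le m$, and none otherwise. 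This automatically truncates the $d$-sum to $d\in\mathbb{A}^+_{\leq m}$.

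Finally, inserting $\#\{l\in\mathbb{A}^+_m:d\mid l\}=q^m/|d|$ yields
\begin{equation*}
    \sum_{l\in\mathbb{A}^+_m}h(l)=q^m\sum_{d\in\mathbb{A}^+_{\leq m}}\frac{\mu(d)}{|d|}\prod_{P\mid d}\frac{1}{|P|+1},
\end{equation*}
which is the claimed identity. There is no genuine analytic obstacle, since every sum in sight is finite and the interchange is unconditional; the only points requiring care are the bookkeeping in the expansion of the product (ensuring the sign $\mu(d)$ and the square-free restriction line up) and the divisor count, in particular the correct emergence of the degree bound $\deg(d)\le m$ from the vanishing of $\#\{l\in\mathbb{A}^+_m:d\mid l\}$ once $\deg(d)>m$.
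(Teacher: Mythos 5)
Your proof is correct and complete: the M\"obius expansion $\prod_{P\mid l}\left(1-\frac{1}{|P|+1}\right)=\sum_{d\mid l}\mu(d)\prod_{P\mid d}\frac{1}{|P|+1}$, the interchange of the two finite sums, and the count of exactly $q^{m-\deg(d)}=q^m/|d|$ monic multiples of $d$ in degree $m$ (zero when $\deg(d)>m$, which is what produces the truncation to $d\in\mathbb{A}^+_{\leq m}$) assemble precisely into the stated identity. The paper itself gives no argument---it defers to \cite{Andrade2012}, Lemma 5.7---and your write-up is the standard proof of that lemma, so your approach coincides with the one the paper implicitly relies on.
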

\begin{proof}
See \cite{Andrade2012}, Lemma 5.7.
\end{proof}

\begin{lemma}
We have
\begin{enumerate}
\item
\begin{equation*}
    \sum_{d\in\mathbb{A}^+_{\leq g}}\mu(d)\prod_{P|d}\frac{1}{|P|+1}\leq g+1.
\end{equation*}
    \item For $s=1$ or $4$, we have 
\begin{equation*}
    \sum_{d\in\mathbb{A}^+_{\leq g}}\frac{\mu(d)}{|d|^s}\prod_{P|d}\frac{1}{|P|+1}=P(s)+ O(q^{-sg}).
\end{equation*}
\end{enumerate}
\end{lemma}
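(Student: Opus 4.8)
The plan is to recognize both displayed sums as truncations of one and the same Euler product and to reduce everything to a single uniform coefficient bound. Write $h(d)=\mu(d)\prod_{P\mid d}\frac{1}{|P|+1}$, which is multiplicative and supported on square-free $d$; its generating series is
\begin{equation*}
G(u):=\prod_{P}\left(1-\frac{u^{\deg P}}{|P|+1}\right)=\sum_{n\ge 0}a_n u^n,\qquad a_n=\sum_{d\in\mathbb{A}^+_n}\mu(d)\prod_{P\mid d}\frac{1}{|P|+1}.
\end{equation*}
Setting $u=q^{-s}$ and using $|P|=q^{\deg P}$ gives $G(q^{-s})=\prod_P\bigl(1-\tfrac{1}{|P|^s(|P|+1)}\bigr)=P(s)$, so $P(s)=\sum_{n\ge0}a_nq^{-sn}$. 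Since $|d|^{-s}=q^{-sn}$ for $d\in\mathbb{A}^+_n$, the two sums in the statement are exactly $\sum_{n=0}^{g}a_n$ (part 1) and $\sum_{n=0}^{g}a_nq^{-sn}$ (part 2). I will also use the basic identity $\prod_P(1-v^{\deg P})=1-qv$, which is merely the Euler product of $\zeta_{\mathbb{A}}$ rewritten with $v=q^{-s}$.

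The heart of the argument is the uniform bound $|a_n|\le 1$ for every $n$. By the triangle inequality and $|\mu|\le 1$,
\begin{equation*}
|a_n|\le \sum_{\substack{d\in\mathbb{A}^+_n\\ d\text{ square-free}}}\prod_{P\mid d}\frac{1}{|P|+1}=[u^n]\prod_{P}\left(1+\frac{u^{\deg P}}{|P|+1}\right).
\end{equation*}
Because $0\le \frac{1}{|P|+1}\le\frac{1}{|P|}$ and every coefficient of an Euler product $\prod_P(1+\alpha_P u^{\deg P})$ is a \emph{nonnegative} sum of products of the $\alpha_P$ over square-free divisors, replacing $\frac{1}{|P|+1}$ by $\frac{1}{|P|}$ can only increase each coefficient. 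Hence $|a_n|\le [u^n]\prod_P(1+\tfrac{u^{\deg P}}{|P|})$. Writing $1+x=(1-x^2)/(1-x)$ and applying $\prod_P(1-v^{\deg P})=1-qv$ twice yields the closed form
\begin{equation*}
\prod_{P}\left(1+\frac{u^{\deg P}}{|P|}\right)=\frac{1-u^2/q}{1-u},
\end{equation*}
whose coefficients are $1,\,1,\,1-\tfrac1q,\,1-\tfrac1q,\dots$, all at most $1$. Therefore $|a_n|\le 1$ for all $n\ge0$. (Equivalently, this coefficient is $q^{-n}$ times the number of square-free monic polynomials of degree $n$, namely $q^n-q^{n-1}$ for $n\ge 2$.)

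Granting $|a_n|\le1$, both parts are immediate. For part 1,
\begin{equation*}
\sum_{d\in\mathbb{A}^+_{\le g}}\mu(d)\prod_{P\mid d}\frac{1}{|P|+1}=\sum_{n=0}^{g}a_n\le\sum_{n=0}^{g}|a_n|\le g+1.
\end{equation*}
For part 2, the series $\sum_n a_nq^{-sn}=P(s)$ converges absolutely for every $s>0$, and the truncation error is the tail
\begin{equation*}
\Bigl|P(s)-\sum_{n=0}^{g}a_nq^{-sn}\Bigr|=\Bigl|\sum_{n>g}a_nq^{-sn}\Bigr|\le\sum_{n>g}q^{-sn}=\frac{q^{-s(g+1)}}{1-q^{-s}}=O\!\left(q^{-sg}\right),
\end{equation*}
which is the claim for $s=1$ and $s=4$. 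The only genuine work is the uniform bound $|a_n|\le 1$; once the coefficientwise domination by $\prod_P(1+u^{\deg P}/|P|)$ and its closed form are in hand, the rest is bookkeeping. The point that must be handled carefully is the legitimacy of comparing the two Euler products coefficient by coefficient, which holds precisely because all the relevant coefficients are nonnegative.
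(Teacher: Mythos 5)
Your proof is correct. Note that the paper does not actually prove this lemma in-house: its ``proof'' is a citation to Jung's Lemmas 3.3 and 3.5, so your argument is not an alternative to an in-paper computation but a self-contained replacement for an outsourced one. Your route is the natural one: identify the two truncated sums as partial sums of the power series $\sum_{n\ge0}a_nu^n$ attached to the Euler product $\prod_P\bigl(1-\frac{u^{\deg P}}{|P|+1}\bigr)$, observe that $P(s)$ is its value at $u=q^{-s}$, and reduce both parts to the uniform bound $|a_n|\le 1$. That bound is established correctly: the coefficientwise comparison of $\prod_P\bigl(1+\frac{u^{\deg P}}{|P|+1}\bigr)$ with $\prod_P\bigl(1+\frac{u^{\deg P}}{|P|}\bigr)$ is legitimate precisely because all coefficients are nonnegative sums of products over square-free polynomials, and the closed form $\prod_P\bigl(1+\frac{u^{\deg P}}{|P|}\bigr)=\frac{1-u^2/q}{1-u}$ follows from the zeta identity $\prod_P(1-v^{\deg P})=1-qv$; your parenthetical sanity check via the count $q^n-q^{n-1}$ of square-free monics confirms it. Two points are worth making explicit if this were to be inserted into the paper: (i) equating $P(s)$ with $\sum_{n\ge0}a_nq^{-sn}$ uses the fact that the Euler product and its expansion agree at $u=q^{-s}$, which is justified since both converge absolutely for $|u|<1$ (your bound $|a_n|\le 1$ gives radius of convergence at least $1$, and $q^{-s}<1$); (ii) part 1 of the lemma is stated as a one-sided inequality, and your argument in fact yields the stronger two-sided bound $\bigl|\sum_{n=0}^{g}a_n\bigr|\le g+1$, which is what is actually needed when the lemma is applied to error terms in Proposition 3.1 and Lemma 3.2.
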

\begin{proof}
See \cite{Jung2014AEnsemble}, Lemma 3.5 and Lemma 3.3. 
\end{proof}
\begin{lemma}
If $f\in\mathbb{A}$ is not a perfect square then
\begin{equation}
    \sum_{\substack{D\in\mathcal{H}_{2g+2}\\f\neq \square}}\left(\frac{D}{f}\right)\ll |D|^{\frac{1}{2}}|f|^{\frac{1}{4}}.
\end{equation}
\end{lemma}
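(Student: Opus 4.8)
The plan is to regard $\left(\frac{D}{f}\right)$, for fixed non-square $f$, as a \emph{nonprincipal} character modulo $f$ in the variable $D$: it is completely multiplicative and periodic modulo $f$, and it is principal precisely when every $P\mid f$ occurs to even order, i.e. when $f$ is a perfect square, which is excluded. Writing $n=2g+2$ so that $|D|=q^{n}$, I would first strip the square-free condition by the identity $\mu^{2}(D)=\sum_{l^{2}\mid D}\mu(l)$ and the substitution $D=l^{2}h$. Since $\left(\frac{l^{2}h}{f}\right)=\left(\frac{l}{f}\right)^{2}\left(\frac{h}{f}\right)$ equals $\left(\frac{h}{f}\right)$ when $(l,f)=1$ and vanishes otherwise, this gives
\[
\sum_{D\in\mathcal{H}_{2g+2}}\left(\tfrac{D}{f}\right)=\sum_{\substack{l\in\mathbb{A}^{+}\\(l,f)=1}}\mu(l)\,A_{n-2\deg l}(f),\qquad A_{N}(f):=\sum_{h\in\mathbb{A}^{+}_{N}}\left(\tfrac{h}{f}\right),
\]
so everything reduces to the coefficients $A_{N}(f)$ of the $L$-polynomial $\mathcal{L}(u;f)=\sum_{N\ge0}A_{N}(f)u^{N}$.

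I would then collect three facts about these coefficients. First, by rationality (Rosen, Proposition 4.3, applied to $\left(\frac{\cdot}{f}\right)$) the series $\mathcal{L}(u;f)$ is a polynomial in $u$ of degree at most $\deg f-1$, so $A_{N}(f)=0$ for $N\ge\deg f$; this truncates the $l$-sum to $\deg l\ge g+1-\tfrac12(\deg f-1)$. Second, the trivial bound $|A_{N}(f)|\le\#\mathbb{A}^{+}_{N}=q^{N}$. Third, and crucially, reducing to the primitive quadratic character of conductor $f^{*}\mid f$ (the radical of the odd part of $f$) and invoking the functional equation for $\mathcal{L}(u;f^{*})$ against the Riemann Hypothesis for curves (Weil), one relates $A_{N}$ to the reflected coefficient $A_{\deg f^{*}-1-N}$ and, bounding the latter trivially, obtains the \emph{uniform} estimate $|A_{N}(f)|\ll q^{(\deg f-1)/2}$.

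With these in hand the bound follows by balancing, using only $\#\{l\in\mathbb{A}^{+}_{m}:(l,f)=1\}\le q^{m}$. I would split the surviving range by the size of $N=n-2\deg l$ at the point $N=\tfrac12\deg f$. For the small-$N$ part ($\deg l\ge g+1-\tfrac14\deg f$) the trivial bound gives a contribution $\ll\sum_{m\ge g+1-\frac14\deg f}q^{m}\,q^{n-2m}=\sum q^{n-m}\ll q^{g+1+\frac14\deg f}=|D|^{1/2}|f|^{1/4}$, the geometric sum being dominated by its smallest value of $m$. For the large-$N$ part ($\deg l<g+1-\tfrac14\deg f$) the uniform functional-equation bound gives $\ll\sum_{m<g+1-\frac14\deg f}q^{m}\,q^{(\deg f-1)/2}\ll q^{g+\frac12+\frac14\deg f}\ll|D|^{1/2}|f|^{1/4}$. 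Adding the two ranges yields $\sum_{D\in\mathcal{H}_{2g+2}}\left(\frac{D}{f}\right)\ll|D|^{1/2}|f|^{1/4}$, with implied constant depending only on $q$.

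The hard part is the uniform control of $A_{N}(f)$ for $N$ in the middle of its range. Bounding $A_{N}$ coefficient-by-coefficient through Weil's bound $|\alpha_{i}|=\sqrt q$ produces an elementary-symmetric estimate of the shape $\binom{\deg f-1}{N}q^{N/2}$, which blows up like $2^{\deg f}$ near $N\approx\tfrac12\deg f$ and is useless for small $q$; it is the functional equation that converts a large-index coefficient into a small-index one, where the trivial bound is sharp, and so yields the clean uniform estimate. The second delicate point is purely bookkeeping: one must split at exactly $N=\tfrac12\deg f$ so that the trivial bound (weighted by the abundance $q^{m}$ of square divisors) and the functional-equation bound (weighted by their scarcity) both land on the common exponent $|D|^{1/2}|f|^{1/4}$. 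A minor technical wrinkle is the passage from $f$ to its primitive conductor $f^{*}$ when $f$ is not square-free, which introduces only finitely many bounded Euler factors and affects none of the three estimates above.
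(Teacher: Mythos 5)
The paper does not actually prove this lemma: its ``proof'' is the citation of Lemma~4.3 of Andrade's paper, which in turn rests on Lemma~6.4 of Andrade--Keating. Your skeleton coincides with that underlying argument --- strip the square-free condition via $\mu^{2}(D)=\sum_{l^{2}\mid D}\mu(l)$, reduce to the coefficients $A_{N}(f)=\sum_{h\in\mathbb{A}^{+}_{N}}\left(\frac{h}{f}\right)$ of the $L$-polynomial of the non-principal character $\left(\frac{\cdot}{f}\right)$, use the vanishing $A_{N}(f)=0$ for $N\geq\deg f$, and estimate what survives --- so this is less a different route than a correct reconstruction of the cited one, with two genuine points in its favour. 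First, your uniform middle-range bound obtained from the functional equation, combined with the split at $N=\tfrac{1}{2}\deg f$, is exactly what produces $|D|^{1/2}|f|^{1/4}$ for every odd $q>3$: your criticism of the purely coefficient-wise Weil bound $\binom{\deg f-1}{N}q^{N/2}$ is justified, since summing it over $N$ yields $|D|^{1/2}2^{\deg f}$, and $2^{\deg f}\leq |f|^{1/4}$ only when $q\geq 16$. (Note also that the reflection $|A_{N}|=q^{N-d/2}|A_{d-N}|\leq q^{d/2}$, with $d$ the degree of the $L$-polynomial, uses only the functional equation and the trivial bound; the Riemann Hypothesis is not actually needed for this step.) Second, you treat the $\mathcal{H}_{2g+2}$ family directly, whereas the lemma being cited is stated for $\mathcal{H}_{2g+1}$.

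The one genuine soft spot is your closing remark that passing from $f$ to the conductor $f^{*}$ ``introduces only finitely many bounded Euler factors''. It does not: the number of omitted Euler factors is the number $\omega'$ of primes dividing $f$ but not $f^{*}$, which is unbounded as $f$ varies, and writing $\mathcal{L}(u;f)=\mathcal{L}(u;f^{*})\prod_{P}(1-\chi^{*}(P)u^{\deg P})=\mathcal{L}(u;f^{*})\sum_{j}c_{j}u^{j}$ one only has $\sum_{j}|c_{j}|\leq 2^{\omega'}$. That factor is $|f|^{o(1)}$ but not $O(1)$, so your argument as written proves $|D|^{1/2}|f|^{1/4+\epsilon}$ rather than the stated inequality (this weaker form would still suffice for every application of the lemma in Section~3, but it is not the lemma as stated). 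The repair is local: every prime counted by $\omega'$ divides $f$ to exponent at least $2$, hence $2\omega'\leq\deg f-\deg f^{*}$, and since $q>2$ this gives $2^{\omega'}q^{\deg f^{*}/2}\leq q^{(\deg f-\deg f^{*})/2}\,q^{\deg f^{*}/2}=q^{\deg f/2}$, restoring the clean uniform bound $|A_{N}(f)|\ll q^{\deg f/2}$ for all non-square $f$. A second, smaller omission: when $\left(\frac{\cdot}{f^{*}}\right)$ is an even character its $L$-polynomial carries a trivial zero, so the functional equation must be applied to the completed polynomial; peeling off the corresponding linear factor costs only a bounded constant. With these two patches your proof is complete and correct.
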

\begin{proof}
See \cite{SimpleLaValeurMoyenneDe}, Lemma 4.3.
\end{proof}
\section{Proof of Main Theorem}
From Lemma 2.5, we can show that the terms corresponding to the contribution of non-square $f$ in \ref{eq:2.7} are bounded by $q^g$ and $q^{-\frac{g}{2}}$ respectively. Therefore, it remains to calculate the contribution of the square polynomials $f$ in (\ref{eq:2.7}), which are calculated using the following results.
\begin{prop}
We have
\begin{equation}
    \sum_{D\in\mathcal{H}_{2g+2}}\sum_{\substack{f\in\mathbb{A}^+_{\leq 2g}\\f=\square}}(-1)^{\text{deg}(f)}\frac{\chi_D(f)}{|f|^2}=\frac{|D|}{\zeta_{\mathbb{A}}(2)}\zeta_{\mathbb{A}}(4)P(4)+O(q^{g(1+\epsilon)}).
\end{equation}
\end{prop}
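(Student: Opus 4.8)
The plan is to collapse the double sum to a single Dirichlet series in the square-root variable, apply the counting estimate of Lemma 2.2 to the inner sum over $D$, and then evaluate the resulting arithmetic sum by means of Lemmas 2.3 and 2.4.

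First I would parametrize the squares. Every square $f\in\mathbb{A}^+_{\leq 2g}$ is $f=l^2$ for a unique monic $l$ with $\deg l\leq g$; since $\deg f=2\deg l$ is even, the sign $(-1)^{\deg f}$ is $1$, and $\chi_D(l^2)=\chi_D(l)^2$ equals $1$ when $(D,l)=1$ and $0$ otherwise. Hence the left-hand side becomes $\sum_{l\in\mathbb{A}^+_{\leq g}}|l|^{-4}\sum_{\substack{D\in\mathcal{H}_{2g+2}\\(D,l)=1}}1$. Applying Lemma 2.2 with $|D|=q^{2g+2}$ splits the inner count into the main term $\frac{q^{2g+2}}{\zeta_{\mathbb{A}}(2)}\prod_{P|l}\frac{|P|}{|P|+1}$ and an error $O(q^{g+1}|l|^{\epsilon})$. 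The error contributes $O\big(q^{g+1}\sum_{m=0}^{g}q^{(\epsilon-3)m}\big)=O(q^{g+1})=O(q^{g(1+\epsilon)})$, which is already of the claimed size.

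It then remains to evaluate $\frac{q^{2g+2}}{\zeta_{\mathbb{A}}(2)}\sum_{l\in\mathbb{A}^+_{\leq g}}|l|^{-4}\prod_{P|l}\frac{|P|}{|P|+1}$. Grouping the $l$-sum by degree $m=\deg l$ and inserting Lemma 2.3 turns the inner sum into $\sum_{m=0}^{g}q^{-3m}\sum_{d\in\mathbb{A}^+_{\leq m}}\frac{\mu(d)}{|d|}\prod_{P|d}\frac{1}{|P|+1}$. I would interchange the orders of $m$ and $d$, sum the geometric series in $m$, and thereby write the expression as a difference of two pieces. The principal piece is $\frac{1}{1-q^{-3}}\sum_{d\in\mathbb{A}^+_{\leq g}}\frac{\mu(d)}{|d|^4}\prod_{P|d}\frac{1}{|P|+1}$, which by Lemma 2.4 with $s=4$ equals $\frac{1}{1-q^{-3}}\big(P(4)+O(q^{-4g})\big)$; the remaining piece carries a factor $q^{-3(g+1)}$ and, by Lemma 2.4 with $s=1$, is $O(q^{-3g})$. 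Recognizing $(1-q^{-3})^{-1}=\zeta_{\mathbb{A}}(4)$, the $l$-sum equals $\zeta_{\mathbb{A}}(4)P(4)+O(q^{-3g})$.

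Multiplying through by $\frac{q^{2g+2}}{\zeta_{\mathbb{A}}(2)}=\frac{|D|}{\zeta_{\mathbb{A}}(2)}$ produces the main term $\frac{|D|}{\zeta_{\mathbb{A}}(2)}\zeta_{\mathbb{A}}(4)P(4)$ together with an $O(q^{-g+2})$ contribution, which is absorbed; combined with the Lemma 2.2 error this gives the stated $O(q^{g(1+\epsilon)})$. The main obstacle is the bookkeeping in the third step: the interchange of summation and the geometric summation in $m$ that, via $\zeta_{\mathbb{A}}(4)=(1-q^{-3})^{-1}$, converts the truncated M\"obius sum into the closed form $\zeta_{\mathbb{A}}(4)P(4)$, and checking that each truncation tail is genuinely smaller than $q^{g(1+\epsilon)}$. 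As a consistency check, the untruncated sum $\sum_{l\in\mathbb{A}^+}|l|^{-4}\prod_{P|l}\frac{|P|}{|P|+1}$ has an Euler product whose local factor simplifies to $\frac{|P|^5+|P|^4-1}{(|P|+1)(|P|^4-1)}$, which is exactly the local factor of $\zeta_{\mathbb{A}}(4)P(4)$.
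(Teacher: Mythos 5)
Your proposal is correct and follows essentially the same route as the paper's proof: reduce to $f=l^2$, apply Lemma 2.2 to the coprimality count, convert via Lemma 2.3, interchange the $m$ and $d$ sums, sum the geometric series using $\zeta_{\mathbb{A}}(4)=(1-q^{-3})^{-1}$, and finish with Lemma 2.4 at $s=4$ and $s=1$. Your error bookkeeping and the Euler-product consistency check are sound additions, but the argument is the paper's argument.
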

\begin{proof}
Using Lemma 2.2 we have 
\begin{equation*}
    \sum_{D\in\mathcal{H}_{2g+2}}\sum_{\substack{f\in\mathbb{A}^+_{\leq 2g}\\f=\square}}(-1)^{\text{deg}(f)}\frac{\chi_D(f)}{|f|^2}=\frac{|D|}{\zeta_{\mathbb{A}}(2)}\sum_{m=0}^{g}q^{-4m}\sum_{l\in\mathbb{A}^+_{m}}\prod_{P|l}\frac{|P|}{|P|+1}+O\left(|D|^{\frac{1}{2}}\sum_{n=0}^{2g}q^{n\epsilon-n}\right).
\end{equation*}
Invoking Lemma 2.3, we have 
\begin{align*}
    \sum_{D\in\mathcal{H}_{2g+2}}\sum_{\substack{f\in\mathbb{A}^+_{\leq 2g}\\f=\square}}(-1)^{\text{deg}(f)}\frac{\chi_D(f)}{|f|^2}&=\frac{|D|}{\zeta_{\mathbb{A}}(2)}\sum_{d\in\mathbb{A}^+_{\leq g}}\frac{\mu(d)}{|d|}\prod_{P|d}\frac{1}{|P|+1}\sum_{\text{deg}(d)\leq m\leq g}q^{-3m}
    +O\left(q^{-g}\frac{q^{2g\epsilon+\epsilon}-q^{2g+1}}{q^\epsilon-q}\right).
\end{align*}
Thus we get
\begin{align*}
    \sum_{D\in\mathcal{H}_{2g+2}}\sum_{\substack{f\in\mathbb{A}^+_{\leq 2g}\\f=\square}}(-1)^{\text{deg}(f)}\frac{\chi_D(f)}{|f|^2}&=\frac{|D|}{\zeta_{\mathbb{A}}(2)}\zeta_{\mathbb{A}}(4)\sum_{d\in\mathbb{A}^+_{\leq g}}\frac{\mu(d)}{|d|^4}\prod_{P|d}\frac{1}{|P|+1}\\&-\frac{q^{2-g}}{\zeta_{\mathbb{A}}(2)(q^3-1)}\sum_{d\in\mathbb{A}^+_{\leq g}}\frac{\mu(d)}{|d|}\prod_{P|d}\frac{1}{|P|+1}+O(q^{g(1+\epsilon)}).
\end{align*}
Using Lemma 2.4 parts 2 and 3 proves the Proposition. 
\end{proof}
\begin{lemma}
We have 
\begin{equation}
    q^{-2-4g}\sum_{D\in\mathcal{H}_{2g+2}}\sum_{\substack{f\in\mathbb{A}^+_{\leq 2g}}}\chi_D(f)\ll q^{-g}.
\end{equation}
\end{lemma}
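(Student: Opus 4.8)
The plan is to reproduce the scheme of Proposition 3.1, the point being that only the perfect squares among the $f$ produce a term of the stated size; the non-square $f$ have already been controlled by Lemma 2.5 in the discussion preceding Proposition 3.1 (where their contribution to (\ref{eq:2.7}) was recorded as $\ll q^{-g/2}$), so the substance here is the perfect-square contribution. Writing $f=h^2$ with $h\in\mathbb{A}^+_{\leq g}$ and using that $\chi_D(h^2)=\left(\frac{D}{h}\right)^2$ equals $1$ when $(h,D)=1$ and $0$ otherwise (recall $D$ is square-free), I would first reduce the sum to a counting problem,
\begin{equation*}
q^{-2-4g}\sum_{D\in\mathcal{H}_{2g+2}}\sum_{\substack{f\in\mathbb{A}^+_{\leq 2g}\\ f=\square}}\chi_D(f)=q^{-2-4g}\sum_{h\in\mathbb{A}^+_{\leq g}}\#\{D\in\mathcal{H}_{2g+2}:(D,h)=1\},
\end{equation*}
after interchanging the order of summation.

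Next I would feed in Lemma 2.2, namely $\#\{D:(D,h)=1\}=\frac{|D|}{\zeta_{\mathbb{A}}(2)}\prod_{P\mid h}\frac{|P|}{|P|+1}+O(|D|^{1/2}|h|^{\epsilon})$ with $|D|=q^{2g+2}$. The one structural difference from Proposition 3.1 is the absence of the weight $|f|^{-2}=|h|^{-4}$: here each $h$ carries weight one. Since $q^{-2-4g}\,q^{2g+2}=q^{-2g}$, grouping by $m=\deg h$ and invoking Lemma 2.3 rewrites the main term as
\begin{equation*}
\frac{q^{-2g}}{\zeta_{\mathbb{A}}(2)}\sum_{m=0}^{g}q^{m}\sum_{d\in\mathbb{A}^+_{\leq m}}\frac{\mu(d)}{|d|}\prod_{P\mid d}\frac{1}{|P|+1}.
\end{equation*}
Interchanging the $m$- and $d$-sums and summing the geometric series $\sum_{\deg d\leq m\leq g}q^{m}=(q^{g+1}-q^{\deg d})/(q-1)$ splits this into a principal piece carrying the factor $q^{g+1}\sum_{d\leq g}\frac{\mu(d)}{|d|}\prod_{P\mid d}\frac{1}{|P|+1}$ and a secondary piece in which the simplification $q^{\deg d}/|d|=1$ leaves $\sum_{d\leq g}\mu(d)\prod_{P\mid d}\frac{1}{|P|+1}$.

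To finish I would call on Lemma 2.4: part 2 with $s=1$ evaluates the principal $d$-sum as $P(1)+O(q^{-g})$, while part 1 bounds the secondary $d$-sum by $g+1$. The prefactor $q^{-2g}$ against the $q^{g+1}$ coming from the geometric series then leaves a contribution of exact order $q^{-g}$ (the absorbed constant being $P(1)/(\zeta_{\mathbb{A}}(2)(q-1))\cdot q=P(1)$), and both the secondary $O(g)$ term and the Lemma 2.2 remainder $q^{-2-4g}q^{g+1}\sum_{h\leq g}|h|^{\epsilon}\ll q^{-1-2g+g\epsilon}$ are smaller than $q^{-g}$. The main thing to watch is the bookkeeping of exponents: because the weight $q^{m}$ is increasing, the inner sum concentrates at $m=g$ and contributes $q^{g+1}$, in contrast to the convergent, $m$-independent tail $\zeta_{\mathbb{A}}(4)$ that arose in Proposition 3.1 from the weight $q^{-3m}$. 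Verifying that this shift still balances the prefactor $q^{-2g}$ to land precisely on $q^{-g}$, rather than on a larger power, is the step I would carry out most carefully.
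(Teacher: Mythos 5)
Your proposal is correct and follows essentially the same route as the paper's proof: reduce to the perfect-square contribution (non-squares being handled by Lemma 2.5), apply Lemma 2.2 to count the $D$ coprime to $h$, convert via Lemma 2.3, sum the geometric series in $m$, and finish with parts 1 and 2 of Lemma 2.4. Your version is in fact more careful than the paper's about the constants (e.g.\ identifying the leading constant as $P(1)$) and about bounding the Lemma 2.2 error term explicitly, but the decomposition and the key lemmas invoked are identical.
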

\begin{proof}
Using Lemma 2.2, we have
\begin{equation*}
    q^{-2-4g}\sum_{D\in\mathcal{H}_{2g+2}}\sum_{\substack{f\in\mathbb{A}^+_{\leq 2g}\\f=\square}}\chi_D(f)\ll q^{-2g}\sum_{m=0}^g\sum_{l\in\mathbb{A}^+_{m}}\prod_{P|l}\frac{|P|}{|P|+1}.
\end{equation*}
Invoking Lemma 2.3, we get
\begin{align*}
    \sum_{D\in\mathcal{H}_{2g+2}}\sum_{\substack{f\in\mathbb{A}^+_{\leq 2g}\\f=\square}}\chi_D(f)&\ll q^{-2g}\sum_{d\in\mathbb{A}^+_{\leq g}}\frac{\mu(d)}{|d|}\prod_{P|d}\frac{1}{|P|+1}\sum_{\text{deg}(d)\leq m\leq g}q^m\\
    &\ll q^{-g}\sum_{d\in\mathbb{A}^+_{\leq g}}\frac{\mu(d)}{|d|}\prod_{P|d}\frac{1}{|P|+1}-q^{-2g}\sum_{d\in\mathbb{A}^+_{\leq g}}\mu(d)\prod_{P|d}\frac{1}{|P|+1}.
\end{align*}
Parts 1 and 2 from Lemma 2.4 prove the result.
\end{proof}
\par\noindent
Combining results from this section, we get that 
\begin{equation}
    \sum_{D\in\mathcal{H}_{2g+2}}L(2,\chi_{\gamma D})=\frac{q^{2g+2}}{\zeta_{\mathbb{A}}(2)}\zeta_{\mathbb{A}}(4)P(4)+O(q^{g(1+\epsilon)}).
\end{equation}
Using Proposition 2.3 in \cite{Rosen2002}, together with Proposition 1.1 b), completes the proof of Theorem 1.4.\\
\par\noindent
\textbf{Acknowledgement:} The author is grateful to the Leverhulme Trust (RPG-2017-320) for the for the support given during this research through a PhD studentship. The author would also like to thank Dr. Julio Andrade for suggesting this problem to me, as well as his useful advice during the course of the research. 
\bibliographystyle{plain}
\bibliography{K2Ogroup}
Department of Mathematics, University of Exeter, Exeter, EX4 4QF, UK\\
\textit{E-mail Address:}jm1015@exeter.ac.uk
\end{document}